\newtheorem{thm}{Theorem}[section]
\newtheorem{cor}[thm]{Corollary}
\newtheorem{lem}[thm]{Lemma}
\newtheorem{prop}[thm]{Proposition}
\theoremstyle{definition}
\newtheorem*{dem}{Proof}
\newtheorem{defi}[thm]{Definition}
\newtheorem{remark}[thm]{Remark}
\theoremstyle{remark}
\newtheorem{rem}[thm]{Remark}
\numberwithin{equation}{section}
\newcommand{\IR}{\mathbb R}
\newcommand{\IN}{\mathbb N}
\newcommand{\field}[1]{\mathbb{#1}}
\title{Jet schemes of toric surfaces \\
\vspace{0.5cm}
Espaces de jets des surfaces toriques
}
\author{ Hussein MOURTADA }
\address{Laboratoire de Mathématiques de Versailles, UMR CNRS 8100 \\ bât. de Fermat, 45  avenue des états-unis, F-78035 Versailles cedex
France}
\email{mourtada@math.uvsq.fr}
\begin{document}
\maketitle

\selectlanguage{english} 

\begin{abstract}
For $m\in \IN, m\geq 1,$ we determine the irreducible components of the $m-th$ jet scheme of a toric surface $S.$ For $m$ big enough, we connect the number of a class of these irreducible components to the number of exceptional divisors on the minimal resolution of $S.$

\vskip 0.5\baselineskip

\selectlanguage{francais}
\noindent{RÉSUMÉ.}
Pour $m \in \IN, m\geq 1,$ on détermine les composantes irréductibles des $m-$espaces des jets d'une surface torique $S$. Pour $m$ assez grand, on relie le nombre d'une classe de ces composantes au nombre de diviseur exceptionnel sur la résolution minimale de $S$. 

\end{abstract}

\selectlanguage{english}

\section{Introduction}
Nash has related the space of arcs centered in the singular locus of a variety 
to its resolution of singularities  in $1968$ (see \cite{N}). Since the late nineties till nowadays, these schemes and their finite dimensional approximations -Jet schemes- have generated much interest because of their appearence in motivic integration(\cite{K},\cite {DL}) and their use in birationnal geometry  \cite{EM}.\\
Despite this appearence presence of these jet schemes in numerous articles and in many interesting questions, few is known about their geometry for specific class of singularities, except for the following three classes: monomial ideals \cite{GS},
determinantal varieties \cite{D}, plane branches \cite{Mo1}. \\
While arcs on toric varieties have been intensively studied (\cite{Mum},\cite{L},\cite{B-GS},\cite{IK}), jet schemes of such varieties are still unknown. The subject of this note is the study of the jet schemes of toric surfaces. Beside being the simplest toric singularities,  this class of singularities is interesting from two points of view: 
on one hand these surfaces are examples of varieties having rational singularities, but which are not necessary local complete intersection, therefore we can not characterize their rationality by \cite{Mus} via their jet schemes ; on the other hand, despite that these singularities are not complete intersections and therfore we do not have a definition of non-degeneration with respect to their Newton polygon in the sense of Kouchnirenko \cite{Ko},  they heuristically are  non-degenerate because they are desingularized with one toric morphism, so from  a jet-scheme theoretical point of view, they should not have vanishing components \cite{Mo1} (i.e. projective systems of irreducible components whose limit in the arc space are included in the arc space of the singular locus) ; this follows from remark \ref{nv}.
For $m\in \IN, m\geq 1,$ we determine the irreducible components of the $m-th$ jet scheme $S_m$ of a toric surface $S.$ We give formulas for their number and their dimensions in terms of $m,$ and invariants of the cone that defines $S.$ For a given $m,$ we classify these irreducible components by an integer invariant that we call index of speciality. We prove that for $m$ big enough, the components with index of speciality $1,$ is in $1-1$ correspondance with the exceptional divisors that appear on the minimal resolution of $S$.  This is to compare with a result that we have obtained in \cite{Mo2} for rational double point singularities.\\

AKNOWLEDGEMENTS\\

I would like to thank Monique Lejeune-Jalabert whose influence radically improved this note. I would like also to thank Pedro Gonzalez-Perez and Nicolas Pouyanne for discussions during the preparation of this work.\\

A detailed version of this work will be available soon.

\section{Jet schemes of toric surfaces} 
Let $\field{K}$ be an algebraically closed field. Let $X$ be a $\field{K}$-scheme of finite type over $\field{K}$ and let $m \in \mathbb{N}$. The functor $F_m :\field{K}-Schemes \longrightarrow Sets$
which to an affine scheme given by a $\field{K}-$algebra $A$ associates
 $$F_m(Spec~A)=Hom_\field{K}(Spec~A[t]/(t^{m+1}),X)$$
is representable by a $\field{K}-$scheme $X_m$ called the $m-$jet scheme of $X.$ \\For $m,p \in \mathbb{N}, m > p$, the truncation homomorphism $A[t]/(t^{m+1}) \longrightarrow A[t]/(t^{p+1})$ induces
a canonical projection $\pi_{m,p}: X_m \longrightarrow X_p.$ These morphisms clearly verify $\pi_{m,p}\circ \pi_{q,m}=\pi_{q,p}$
for $p<m<q$. This yields an inverse system whose limit $X_{\infty}$ is a scheme called the arc space of $X.$  Note that $X_0=X$. We denote the canonical projections $X_m\longrightarrow X_0$ by $\pi_{m}$ and $X_{\infty}\longrightarrow X_m$ by $\Psi_m$. See \cite{EM} for more about jet schemes.\\
Let $S$ be a singular affine toric surface defined over $\field{K}$ by the cone
$\sigma \subset N_{\IR}=\mathbb{R}^2$ generated by $(1,0)$ and $(p,q),$  where $0<p<q$ and $p,q$
are relatively prime. Let $(c_2,\ldots,c_{e-1})$ be the entries greater than or equal to two
occurring in the Hirzebruch-Jung continued fraction associated to
$q/p$. Then the embedding dimension of $S$ is $e$ (\cite{O}, section 1.6) . We suppose that $e>3,$ the case $e=3$ i.e. the rational double point $S=A_{c_2-1}$ is studied in \cite{Mo2}.
Analyzing the convex hull of
$\sigma^\vee \cap M$, where $M$ is the dual lattice of  $N$, Riemenschneider has exhibited the generators of the ideal  defining $S$ in $\mathbb{A}^e=\textnormal{Spec}\field{K}[x_1,\cdots,x_e]$ in \cite{R} ; these are:
$$E_{ij}=x_ix_j-x_{i+1}x_{i+1}^{c_{i+1}-2} x_{i+2}^{c_{i+2}-2}\cdots x_{j-2}^{c_{j-2}-2}x_{j-1}^{c_{j-1}-2}x_{j-1}, ~~\mbox{where} ~~1\leq i <j-1\leq e-1.$$
Let $f\in k[x_1,\ldots,x_e]$ ; for $m,p \in \mathbb{N}$ such that  $p\leq m,$ we set:
$$Cont^p(f)_{m}(resp.Cont^{>p}(f)_{m}):=\{\gamma \in S_m \mid ord_{\gamma }(f)=p(resp.>p)\},$$
$$Cont^p(f)=\{\gamma \in S_{\infty}\mid ord_{\gamma }(f)=p\},$$
where $ord_{\gamma }(f)$ is the $t-$order of $f\circ\gamma.$ 
\\For $a,b\in \mathbb{N},~b\not=0,$ we denote by $\lceil\frac{a}{b}\rceil$  the ceiling of $\frac{a}{b}$. For $i=2,\cdots, e-1,$
$ s \in \{1,\ldots,\lceil\frac{m}{2}\rceil\}$(i.e. $m\geq 2s-1\geq1)$ and
$ l \in \{s,\ldots,m_i^s\}, $ where $m_i^s:=min\{(c_i-1)s, (m+1)-s \},~$we set  $$D_{i,m}^{s,l}:=Cont^s(x_i)_{m} \cap Cont^{l}(x_{i+1})_{m}~~~~\mbox{and}~~~~C_{i,m}^{s,l}:=\overline{D_{i,m}^{s,l}}.$$

If $R$ is a ring, $I\subseteq R$ an ideal and $f \in R$, we denote by $V(I)$ the subvariety of $Spec~R$ defined by $I$
and by $D(f)$ the open set $D(f):=\textnormal{Spec}~R_f.$
\begin{lem} \label{eff}
 
For $i=2,\cdots,e-1,$ $s\geq 1,$    the ideal defining $C_{i,2s-1}^{s,s}$ in $\mathbb{A}^e_{2s-1}$ is 
$$I_{i,2s-1}^{s,s }=(x_j^{(b)},1 \leq j \leq e,0 \leq b < s).$$
Note that $C_{i,2s-1}^{s,s}$ does not depend on $i$. For $j=1,e,$ we set $C_{j,2s-1}^{s,s}:=C_{i,2s-1}^{s,s},~i=2,\cdots,e.$   
\end{lem}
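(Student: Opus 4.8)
The plan is to identify the set $C_{i,2s-1}^{s,s}$ with the linear subspace $L:=V(I_{i,2s-1}^{s,s})\subseteq \mathbb{A}^e_{2s-1}$ cut out by the vanishing of all coordinates $x_j^{(b)}$ with $b<s$, by proving the two inclusions $D_{i,2s-1}^{s,s}\subseteq L$ and $L\subseteq \overline{D_{i,2s-1}^{s,s}}$. Since $I_{i,2s-1}^{s,s}$ is generated by coordinate linear forms it is prime, hence radical, so it will be the reduced ideal of the closure and the lemma follows; its independence from $i$ is visible directly on the list of generators.

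First I would record the easy facts. A point of $L$ is a jet $\gamma=(x_1(t),\dots,x_e(t))$, the series taken modulo $t^{2s}$, with $ord_\gamma(x_j)\ge s$ for every $j$. For each generator $E_{kj}$ the quadratic term $x_kx_j$ has order $\ge 2s$, while the monomial term is a product of the variables $x_{k+1},\dots,x_{j-1}$ of total degree at least two, so it too has order $\ge 2s$; hence $E_{kj}(\gamma)\equiv 0 \pmod{t^{2s}}$ and $L\subseteq S_{2s-1}$. Thus $L$ is a linear, in particular irreducible, closed subscheme of $S_{2s-1}$. Granting the inclusion $D_{i,2s-1}^{s,s}\subseteq L$, the definitions give $D_{i,2s-1}^{s,s}=L\cap\{x_i^{(s)}\neq 0\}\cap\{x_{i+1}^{(s)}\neq 0\}$; as $x_i^{(s)}$ and $x_{i+1}^{(s)}$ are among the free coordinates on $L$, this is a nonempty open subset of the irreducible set $L$, hence dense, so $\overline{D_{i,2s-1}^{s,s}}=L$, which is the assertion.

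The heart of the matter, and the step I expect to be the main obstacle, is the inclusion $D_{i,2s-1}^{s,s}\subseteq L$, i.e. that on $D_{i,2s-1}^{s,s}$ every coordinate has order $\ge s$, and not only $x_i$ and $x_{i+1}$. I would prove this by propagating the order bound outward from the two anchors $x_i,x_{i+1}$, which have order exactly $s$. The key idea is to use, for a target index $k<i$, not the neighbouring relation but the long generator $E_{k,i+1}=x_kx_{i+1}-M_k$ in which the anchor $x_{i+1}$ occurs, where $M_k=x_{k+1}^{c_{k+1}-1}x_{k+2}^{c_{k+2}-2}\cdots x_{i-1}^{c_{i-1}-2}x_i^{c_i-1}$ is a monomial in $x_{k+1},\dots,x_i$ of total degree at least two. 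Running a decreasing induction on $k$ from $i-1$ down to $1$, once $ord_\gamma(x_{k+1}),\dots,ord_\gamma(x_i)\ge s$ are known one gets $ord_\gamma(M_k)\ge s\cdot\deg M_k\ge 2s$, so $M_k\equiv 0 \pmod{t^{2s}}$ and therefore $x_kx_{i+1}\equiv 0$; since $x_{i+1}$ has order exactly $s$, its leading coefficient is invertible, so the lowest term of $x_kx_{i+1}$ lies in degree $ord_\gamma(x_k)+s$, which must be $\ge 2s$, forcing $ord_\gamma(x_k)\ge s$. The symmetric increasing induction on $k$ from $i+2$ up to $e$, using $E_{i,k}$ and the anchor $x_i$, handles the remaining coordinates, with base cases $E_{i-1,i+1}=x_{i-1}x_{i+1}-x_i^{c_i}$ and $E_{i,i+2}=x_ix_{i+2}-x_{i+1}^{c_{i+1}}$ read off directly since $c_i,c_{i+1}\ge 2$ make the right-hand sides vanish modulo $t^{2s}$.

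The only points requiring care are that each long generator $E_{k,i+1}$ (resp. $E_{i,k}$) really is one of Riemenschneider's generators, guaranteed by $1\le k<i\le e-1$ (resp. $1\le i<k-1\le e-1$), and that the monomial $M_k$ always has degree at least two, so that its order is at least $2s$; this holds because its two extreme exponents $c_{k+1}-1$ and $c_i-1$ are each $\ge 1$, collapsing to the single factor $x_i^{c_i}$ with $c_i\ge 2$ when $k=i-1$. With the bound $ord_\gamma(x_j)\ge s$ established for all $j$, the inclusion $D_{i,2s-1}^{s,s}\subseteq L$ follows, and combined with the density argument above this completes the identification $C_{i,2s-1}^{s,s}=V(I_{i,2s-1}^{s,s})$.
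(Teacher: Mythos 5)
Your proposal is correct and follows essentially the same route as the paper: both directions are proved by showing $D_{i,2s-1}^{s,s}=V(I_{i,2s-1}^{s,s})\cap D(x_i^{(s)}x_{i+1}^{(s)})$, with the key inclusion obtained by propagating the bound $ord_\gamma x_j\geq s$ outward from the anchors $x_i,x_{i+1}$ via the Riemenschneider relations (the paper starts from $E_{i-1,i+1}$ and continues with the other $E_{j,i}$'s and $E_{i,j}$'s, while you anchor the left-hand induction at $x_{i+1}$ via $E_{k,i+1}$ --- an immaterial variation), and then concluding by irreducibility of the coordinate subspace. Your write-up is, if anything, more explicit than the paper's about why each monomial term has degree at least two and hence order at least $2s$.
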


\begin{dem}
Let's prove that  $D_{i,2s-1}^{s,s}=V(I_{i,2s-1}^{s,s})\cap D(x_i^{(s)}x_{i+1}^{(s)}).$
Let $\gamma \in \mathbb{A}^e_{2s-1}$ such that $ord_{\gamma}x_i=ord_{\gamma}x_{i+1}=s.$
So, we have $ord_{\gamma}x_i^{c_i}=c_is> 2s-1$ because $c_i\geq2.$ If moreover $\gamma$ lies in $S_{2s-1}$, then it satisfies $E_{i-1,i+1}$ mod $t^{2s},$ which is equivalent to $ord_{\gamma}x_{i-1} \geq s $, because $x_i^{c_i}\circ \gamma \equiv0$ mod $t^{2s}$
and $ord_{\gamma}x_{i+1}=s$. 
The same argument, using $E_{i-2,i},E_{i,i+2}$ and so on by induction, using the other $E_{ji}$'s and $E_{ij}$'s, gives that $ord_{\gamma}x_{j}\geq s.$
We deduce $$D_{i,2s-1}^{s,s}\subset V(I_{i,2s -1}^{s,s})\cap D(x_i^{(s)}x_{i+1}^{(s)}).$$
The opposite inclusion comes from the fact that a jet in $V(I_{i,2s-1}^{s,s})\cap D(x_i^{(s)}x_{i+1}^{(s)})
\subset \mathbb{A}^e_{2s}$
satisfies all the equations of $S$ modulo $t^{2s}.$ Since $V(I_{i,2s-1}^{s,s})\subset \mathbb{A}^e_{2s-1}$  is irreducible, the lemma follows.
\end{dem}
\begin{prop}\label{irr}
 
 For $i=2,\cdots,e-1,~m \in \mathbb{N},~  s \in \{1,\ldots,\lceil\frac{m}{2}\rceil\}$ and $l \in \{s,\ldots,m_i^s \}\}, $
$C_{i,m}^{s,l}$ is irreducible, and its codimension in  $\mathbb{A}^e_m$ is equal to $$ se+(m-(2s-1))(e-2).$$  
\end{prop}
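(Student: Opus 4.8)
The plan is to induct on $m$, with Lemma \ref{eff} furnishing the base case: for $m=2s-1$ the only admissible value is $l=s$, and there $C_{i,2s-1}^{s,s}=V(I_{i,2s-1}^{s,s})$ is a linear subspace, hence irreducible, of codimension $se$ in $\mathbb{A}^e_{2s-1}$; this matches the formula, since $se+(m-(2s-1))(e-2)=se$ when $m=2s-1$. For the inductive step I would show, via the truncation $\pi_{m,m-1}\colon\mathbb{A}^e_m\to\mathbb{A}^e_{m-1}$, that raising $m$ by one raises the codimension of $C_{i,m}^{s,l}$ by exactly $e-2$, which is the entire content of the formula.

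The mechanism of the step rests on two facts. First, exactly as in the proof of Lemma \ref{eff}, the adjacent equations $E_{i-1,i+1}=x_{i-1}x_{i+1}-x_i^{c_i}$ and $E_{i,i+2}=x_ix_{i+2}-x_{i+1}^{c_{i+1}}$ and their iterates force, on $D_{i,m}^{s,l}$, a prescribed order $n_j$ on every coordinate $x_j$, given by the recursion $n_{i-1}=c_is-l$, $n_{i+2}=c_{i+1}l-s,\dots$, with $n_i=s$, $n_{i+1}=l$, and with every $n_j\ge s\ge1$ in the admissible range. Second, because all orders are positive, the top coefficients $x_1^{(m)},\dots,x_e^{(m)}$ cannot appear in the coefficient of $t^m$ of any binomial $E_{kj}$: in either $x_kx_j$ or its monomial part a factor $x_r^{(m)}$ would have to be paired with a coefficient of order $0$, and all of these vanish. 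Hence the genuinely new equations at level $m$, namely the $t^m$-coefficients of the $E_{kj}$, are conditions on the truncated jet alone, the fibre of $\pi_{m,m-1}$ over its image is an affine space, and $\dim D_{i,m}^{s,l}$ exceeds $\dim\pi_{m,m-1}(D_{i,m}^{s,l})$ by the number of visible coordinates $\#\{j:n_j\le m\}$.

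The heart of the matter, and the step I expect to be the main obstacle, is to prove that the $t^m$-coefficients of the $E_{kj}$ cut the irreducible variety $C_{i,m-1}^{s,l'}$ (with $l'=l$ when $l\le m-1$) in codimension exactly $e-2$ without destroying irreducibility. Here I would pass to the dense open locus where every leading coefficient $x_j^{(n_j)}$ is nonzero and use the $e-2$ adjacent equations $E_{k,k+2}$ ($k=1,\dots,e-2$) to solve triangularly, moving outward from the valley $\{i,i+1\}$, for one top coefficient of each of the $e-2$ non-valley variables in terms of the remaining (free) coordinates; this simultaneously exhibits the sliced variety as an open subset of an affine space, giving irreducibility, and shows that the adjacent equations contribute exactly $e-2$ independent conditions, while the valley variables $x_i,x_{i+1}$ supply the two free directions responsible for the net rise of two in the dimension at each step. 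The delicate point is that the non-adjacent relations $E_{kj}$ with $j>k+2$ are not automatically satisfied modulo $t^{m+1}$: the identity
\[
x_{k+1}x_{k+2}\,E_{k,k+3}=x_{k+2}^{c_{k+2}}E_{k,k+2}+x_{k+1}^{c_{k+1}}E_{k+1,k+3}+E_{k,k+2}E_{k+1,k+3}
\]
and its higher analogues only give $E_{k,k+3}\equiv0$ after division by the positive-order factor $x_{k+1}x_{k+2}$, so one must check that, once the adjacent equations have been solved, these divided relations hold identically on the open locus and therefore contribute no further condition. Keeping simultaneous track of the window $\{j:n_j\le m\}$ of visible variables, so that a coordinate and the adjacent equation that would pin it down leave or enter the count together, is what makes this bookkeeping the crux.

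Finally I would dispose of the boundary value $l=(m+1)-s$, which enters the admissible range only when $(m+1)-s$ becomes smaller than $(c_i-1)s$. For such $l$ the pair $(s,l)$ is inadmissible at level $m-1$, so $C_{i,m}^{s,l}$ is not the one-step lift of a lower component; I would instead match it directly to the linear base case of Lemma \ref{eff} through the order recursion. Assembling the generic inductive step (an affine-space fibration over an irreducible base has irreducible total space), the codimension count $se+(m-(2s-1))(e-2)$, and this boundary analysis then yields both assertions of the proposition for all admissible $(s,l)$.
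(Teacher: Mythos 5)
Your route---induction on $m$ one truncation level at a time---is organized quite differently from the paper's, which treats all $m$ at once: there one uses the syzygies among the $E_{jh}$ (the same ingredient you invoke) to show that $D_{i,m}^{s,l}$ is \emph{exactly} the locus in $\mathbb{A}^e_m$ where $ord_{\gamma}x_i=s$, $ord_{\gamma}x_{i+1}=l$ and only the relations $E_{jh}$ with $i\in\{j,h\}$ or $(j,h)=(i-1,i+1)$ vanish to order $\geq m+1$, and then reads off that the coordinate ring of $D_{i,m}^{s,l}$ is a polynomial ring over $\field{K}[x_i^{(s)},x_{i+1}^{(l)}]_{x_i^{(s)}x_{i+1}^{(l)}}$. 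Irreducibility and the codimension drop out in one stroke, with no base case, no boundary case $l=(m+1)-s$, and no level-by-level bookkeeping. You have correctly isolated the two real ingredients (disposing of the non-adjacent $E_{kj}$ via syzygies, and the triangular elimination outward from the valley $\{i,i+1\}$), so your plan is salvageable, but two points need repair.

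First, the assertion that the adjacent equations force a \emph{prescribed} order $n_j$ on every coordinate is false: $E_{i-1,i+1}$ pins $ord_{\gamma}x_{i-1}$ to $c_is-l$ only when $c_is\leq m$; once $c_is>m$ the monomial $x_i^{c_i}$ is invisible mod $t^{m+1}$ and one gets only the lower bound $ord_{\gamma}x_{i-1}\geq m+1-l$, so there is no leading coefficient $x_{i-1}^{(n_{i-1})}$ to invert or to solve for---the low-order coefficients of $x_{i-1}$ are simply forced to vanish and the remaining ones are free parameters. The ``dense open locus where every leading coefficient is nonzero'' therefore does not exist for such coordinates, and your elimination must be restated as: each equation in your triangular system either solves for one new coefficient or forces it to zero, contributing one condition either way. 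Your ``window'' remark gestures at this but does not carry it out, and it is precisely where the count $e-2$ has to be verified. Second, the induction buys you essentially nothing: since all $x_r^{(0)}$ vanish on the relevant locus, the $t^m$-coefficients of the $E_{kj}$ involve no level-$m$ variables, so $\pi_{m,m-1}(D_{i,m}^{s,l})$ is a \emph{proper} subvariety of $D_{i,m-1}^{s,l}$ of codimension $e-2$; the inductive hypothesis (irreducibility and dimension of $C_{i,m-1}^{s,l}$) says nothing about that image, and your ``heart of the matter'' step is forced to redo the full structural analysis at level $m$ from scratch---which is exactly the paper's one-shot argument. I would therefore drop the induction and the separate boundary analysis, keep your elimination, and state the conclusion directly as the explicit description of $D_{i,m}^{s,l}$ above.
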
 
\begin{dem}
A similar argument to the one used in \ref{eff} shows that $\pi_{m,2s-1}(D_{i,m}^{s,l})\subset C^{s,s}_{i,2s-1}.$
Using Syzigies among $E_{jh},1\leq j <h-1\leq e-1,$ we prove that $$D_{i,m}^{s,l}=\{\gamma \in \mathbb{A}^e_m~;~ord_{\gamma}x_{i}=s,~ord_{\gamma}x_{i+1}=l,~ord_{\gamma}E_{j,h}\geq m+1~ \textnormal{for} ~(j,h)=(i-1,i+1),~ j=i,~h=i  \}.$$
This explicit description of $D_{i,m}^{s,l}$ shows that its coordinate ring is isomorphic to a polynomial ring over $Spec~\field{K}[x^{(s)}_i,x^{(l)}_{i+1}]_{x^{(s)}_ix^{(l)}_{i+1}},$ therefore its closure $C_{i,m}^{s,l}$ is irreducible. It also allows to compute its codimension. 

\end{dem}
\begin{remark}\label{nv} 
For $i=2,\cdots, e-1$ and $m,s\in \mathbb{N}$ such that $m\geq 2s-1$ and $l \in \{s,\ldots,m_i^s\},$ we have $\Psi^{-1}_m(D_{i,m}^{s,l})\not= \emptyset.$
\end{remark}
\begin{dem} Actually we prove that if $s\leq l \leq (c_i-1)s,$ then $Cont^s(x_i)\cap Cont^l(x_{i+1})\not=\emptyset.$
Let $u_i, i=1,\cdots ,e,$ be the system of minimal generators of $\sigma^\vee \cap M,$ i.e. $x^{u_i}=x_i.$ First note that since $(u_i,u_{i+1})$ is a $\mathbb{Z}-$basis of $M,$ there exists a unic $v\in N$ such that $<u_i,v>=s$ and $<u_{i+1},v>=l.$
It is enough to prove that $v\in \sigma.$ For $e=4,$ this is easy to check, and the lemma follows by induction on $e.$
 
\end{dem}

\begin{prop}\label{gs}
 Let $m,s \in \mathbb{N}$ such that $m\geq2s-1.$
\begin{enumerate}
\item  For $i=1,e,$ we have that $\pi_{m,2s-1}^{-1}(C_{i,2s-1}^{s,s}\cap D(x_{i}^{(s)}))$ is irreducible.
\item For $i=2,\cdots,e-1,~m\geq 2s-1,$ the irreducible components of $\overline{\pi_{m,2s-1}^{-1}(C_{i,2s-1}^{s,s}\cap D(x_{i}^{(s)}))}$ are the $C_{i,m}^{s,l}, l\in \{s,\cdots,m_i^s\}.$ 
\end{enumerate}
\end{prop}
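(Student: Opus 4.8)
The plan is to first describe the preimage explicitly and then treat the two parts separately. By Lemma \ref{eff}, a jet $\delta\in S_{2s-1}$ lies in $C_{i,2s-1}^{s,s}\cap D(x_i^{(s)})$ precisely when $ord_\delta x_j\geq s$ for every $j$ and $ord_\delta x_i=s$. Since $\pi_{m,2s-1}$ only forgets the coefficients of order $\geq 2s$, these conditions pull back to
$$\pi_{m,2s-1}^{-1}(C_{i,2s-1}^{s,s}\cap D(x_i^{(s)}))=\{\gamma\in S_m\mid ord_\gamma x_j\geq s~\forall j,~ord_\gamma x_i=s\}.$$
I would then stratify this set by the value $l:=ord_\gamma x_{i+1}$. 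For finite $l$ the corresponding stratum is exactly $D_{i,m}^{s,l}$ — the explicit description of $D_{i,m}^{s,l}$ established in \ref{irr} shows that its jets automatically satisfy $ord_\gamma x_j\geq s$ for all $j$ — while the jets with $x_{i+1}\equiv 0$ form a residual piece $Z$.

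For part (1) take $i=1$, the case $i=e$ being symmetric. Here $ord_\gamma x_1=s$, so $x_1$ becomes invertible after factoring $t^s$, and there is no relation of the shape $E_{0,2}$. Using $E_{1,j}=x_1x_j-N_{1,j}$, where $N_{1,j}$ is a monomial in $x_2,\dots,x_{j-1}$, I would solve recursively $x_j=N_{1,j}/x_1$ for $j=3,\dots,e$, expressing every coordinate through $x_1$ and $x_2$; the syzygies among the $E_{jh}$ used in \ref{irr} guarantee that the remaining equations then hold automatically, and the resulting orders satisfy $ord_\gamma x_j\geq s$. This identifies the preimage with the irreducible variety parametrized by the coefficients of $x_1$ (with $x_1^{(s)}$ invertible) and of $x_2$ (of order $\geq s$), hence it is irreducible. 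The conceptual point is that, contrary to the interior case, fixing $ord_\gamma x_1=s$ imposes no product relation splitting an order between two neighbours of $x_1$, so no branching on $l=ord_\gamma x_2$ occurs.

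For part (2), Proposition \ref{irr} already gives that for each $l\in\{s,\dots,m_i^s\}$ the set $C_{i,m}^{s,l}$ is irreducible (and non-empty by \ref{nv}) of codimension $se+(m-(2s-1))(e-2)$, \emph{independent of $l$}. Consequently these closed sets are pairwise distinct (their generic points have distinct values of $ord\,x_{i+1}$) and, having equal dimension, none can contain another. It remains to show that the strata with $l>m_i^s$, together with $Z$, contribute nothing new, and here I distinguish two cases through the relation $E_{i-1,i+1}=x_{i-1}x_{i+1}-x_i^{c_i}$. If $c_is\leq m$, then $x_i^{c_i}$ has order exactly $c_is\leq m$, which forces $ord_\gamma(x_{i-1}x_{i+1})=c_is$; since $ord_\gamma x_{i-1}\geq s$ this yields $l\leq(c_i-1)s=m_i^s$, so the higher strata are empty. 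If $c_is\geq m+1$, then $m_i^s=m+1-s$, and for any $\gamma$ with $l>m+1-s$ (including $x_{i+1}\equiv 0$) I would perturb only the coordinate $x_{i+1}\mapsto x_{i+1}+\tau t^{m+1-s}$: in every equation $E_{jh}$ the variable $x_{i+1}$ occurs only against cofactors of order $\geq s$, so the perturbation alters each $E_{jh}$ in order $\geq m+1$ and the perturbed jet $\gamma_\tau$ stays in $S_m$; for $\tau\neq 0$ one has $ord_{\gamma_\tau}x_{i+1}=m+1-s$, whence $\gamma=\lim_{\tau\to 0}\gamma_\tau$ lies in $C_{i,m}^{s,m+1-s}$ (note $m+1-s\geq s$ because $m\geq 2s-1$). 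Thus $\overline{\pi_{m,2s-1}^{-1}(C_{i,2s-1}^{s,s}\cap D(x_i^{(s)}))}=\bigcup_{l=s}^{m_i^s}C_{i,m}^{s,l}$, and the properties above identify the $C_{i,m}^{s,l}$ as exactly its irreducible components.

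I expect the main obstacle to be the absorption step of part (2): checking that the single perturbation $x_{i+1}\mapsto x_{i+1}+\tau t^{m+1-s}$ preserves \emph{all} the equations $E_{jh}$ at once, which rests on the bookkeeping that $x_{i+1}$ never appears in an $E_{jh}$ except multiplied by a factor of order $\geq s$ (the monomial parts $N_{jh}$ are never a single variable to the first power). The two regimes $c_is\leq m$ and $c_is\geq m+1$, matching the two terms of $m_i^s=\min\{(c_i-1)s,m+1-s\}$, must be reconciled, and it is the $l$-independence of the codimension in \ref{irr} that finally rules out any hidden containment among the candidate components.
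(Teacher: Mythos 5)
Your argument is correct in substance, but it takes a genuinely different route from the paper on the key point, part (2). Both proofs start the same way, by unwinding Lemma \ref{eff} to describe $\pi_{m,2s-1}^{-1}(C_{i,2s-1}^{s,s}\cap D(x_i^{(s)}))$ as the locus in $S_m$ where all coordinates have order $\geq s$ and $x_i$ has order exactly $s$ (the paper, like you, leans on unproven syzygy bookkeeping to reduce to a short list of governing equations, so you are at the same level of rigor there). After that the paper does not stratify by $l=ord_\gamma x_{i+1}$ at all: it observes that the explicit description makes the coordinate ring of the preimage a polynomial ring over the coordinate ring of a locally closed subset of the $m$-jets of the $A_{c_i-1}$ hypersurface $E_{i-1,i+1}$, and then imports the known component structure of jet schemes of rational double points from \cite{Mo2}. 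Your proof instead analyzes the strata directly: the order computation on $E_{i-1,i+1}$ in the regime $c_is\leq m$ correctly kills the strata with $l>(c_i-1)s$, and in the regime $c_is\geq m+1$ your one-parameter degeneration $x_{i+1}\mapsto x_{i+1}+\tau t^{m+1-s}$ (which does preserve all the $E_{jh}$ modulo $t^{m+1}$, precisely because no monomial part of an $E_{jh}$ is a single variable to the first power and $m\geq 2s-1$) absorbs the deep strata and the locus $x_{i+1}\equiv 0$ into $C_{i,m}^{s,m+1-s}$; together with the equidimensionality and distinctness from \ref{irr} and non-emptiness from \ref{nv}, this identifies the components. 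What the paper's reduction buys is brevity and a conceptual explanation of why only the single equation $E_{i-1,i+1}$ controls the branching at index $i$; what your version buys is self-containedness (no appeal to \cite{Mo2}) and an explicit explanation of where the two terms of $m_i^s=\min\{(c_i-1)s,\,m+1-s\}$ come from. Your treatment of part (1), solving for $x_3,\dots,x_e$ from $x_1,x_2$ using invertibility of $x_1/t^s$ and noting the absence of a pinching relation $E_{0,2}$, is likewise a sound, slightly more explicit variant of the paper's ``similar'' argument; just be aware that each solved coordinate is only determined modulo $t^{m+1-s}$, so the preimage is an affine bundle rather than a graph, which still gives irreducibility.
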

 
\begin{dem}
We sketch the proof of (2), the proof of (1) is similar. We have already seen in the proof of proposition \ref{irr} that $D_{i,m}^{s,l}\subset \pi_{m,2s-1}^{-1}(C_{i,2s-1}^{s,s}\cap D(x_{i}^{(s)}))$ for $l\in \{s,\cdots,m_i^s\}.$
Using Syzigies among $E_{jh},1\leq j <h-1\leq e-1,$ we prove that $$\pi_{m,2s-1}^{-1}(C_{i,2s-1}^{s,s}\cap D(x_{i}^{(s)}))=\{\gamma \in \mathbb{A}^e_m~;~ord_{\gamma}x_{j}\geq s ~\textnormal{for}~ j=1,\cdots,e,~ord_{\gamma}x_{i}=s,$$
$$~ord_{\gamma}E_{j,h}\geq m+1~ \textnormal{for} ~(j,h)=(i-1,i+1),~ j=i,~h=i  \}.$$

This implies that the coordinate ring of the above set is isomorphic to a polynomial ring over the coordinate ring
of the locally closed subset of the $m-$jets of the $A_{c_i-1}$ singularity defined by $E_{i-1,i+1},$ consisting of those $\gamma$ such that $ord_{\gamma} x_i=s,~ord_{\gamma} x_{i-1}$ and $ord_{\gamma} x_{i+1}\geq s.$  The claim follows from the description of this latter.
 
\end{dem}

\begin{lem}
 $C_{i,m}^{s,s}=C_{i+1,m}^{s,m_{i+1}^s}$
 \end{lem}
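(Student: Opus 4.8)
The plan is to play the two families of components against each other through the single Riemenschneider relation that links three consecutive variables, and then to close the argument by a dimension count. The crucial observation from Proposition~\ref{irr} is that the codimension $se+(m-(2s-1))(e-2)$ of $C_{i,m}^{s,l}$ in $\mathbb{A}^e_m$ depends neither on $i$ nor on $l$. Hence $C_{i,m}^{s,s}$ and $C_{i+1,m}^{s,m_{i+1}^s}$ are irreducible closed subsets of the same dimension (for $i$ in the range $2\le i\le e-2$, so that both are covered by Proposition~\ref{irr}; the extremal indices are governed by the conventions of Lemma~\ref{eff}). Consequently it is enough to prove a single inclusion between them: an inclusion of irreducible closed subvarieties of equal dimension is forced to be an equality.

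To produce that inclusion I would look only at $E_{i,i+2}=x_ix_{i+2}-x_{i+1}^{c_{i+1}}$, the unique generator involving $x_i$, $x_{i+1}$ and $x_{i+2}$ at once. Let $\gamma\in D_{i,m}^{s,s}$, so that $\gamma\in S_m$ and $ord_\gamma x_i=ord_\gamma x_{i+1}=s$; the constraint $ord_\gamma E_{i,i+2}\ge m+1$ then pins down $ord_\gamma x_{i+2}$. I would distinguish the two terms whose minimum defines $m_{i+1}^s$. If $c_{i+1}s\le m$, the monomial $x_{i+1}^{c_{i+1}}$ is visible modulo $t^{m+1}$ with nonzero leading coefficient, so cancellation against $x_ix_{i+2}$ forces $ord_\gamma(x_ix_{i+2})=c_{i+1}s$ and hence $ord_\gamma x_{i+2}=(c_{i+1}-1)s$ exactly, the leading coefficient of $x_{i+2}$ being a nonzero ratio of those of $x_{i+1}^{c_{i+1}}$ and $x_i$. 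If instead $c_{i+1}s\ge m+1$, then $x_{i+1}^{c_{i+1}}\equiv0$ modulo $t^{m+1}$ and one gets only $ord_\gamma x_{i+2}\ge (m+1)-s$. In either case the relevant value is exactly $m_{i+1}^s$.

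To turn the inequality of the second case into an equality over a dense set, I would invoke the normal form of $D_{i,m}^{s,s}$ recorded in Proposition~\ref{irr}: its coordinate ring is a polynomial ring over $\field{K}[x_i^{(s)},x_{i+1}^{(s)}]_{x_i^{(s)}x_{i+1}^{(s)}}$, and in these coordinates $x_{i+2}^{(m+1-s)}$ is a free variable. It is therefore nonzero on a dense open $U\subseteq D_{i,m}^{s,s}$, where $ord_\gamma x_{i+2}=(m+1)-s=m_{i+1}^s$. In both cases $U\subseteq D_{i+1,m}^{s,m_{i+1}^s}$, and since $D_{i,m}^{s,s}$ is irreducible with closure $C_{i,m}^{s,s}=\overline{U}$, passing to closures gives $C_{i,m}^{s,s}\subseteq C_{i+1,m}^{s,m_{i+1}^s}$. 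Together with the dimension remark above, this yields the asserted equality.

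The step I expect to be delicate is precisely the case $c_{i+1}s\ge m+1$: there $E_{i,i+2}$ alone delivers only the inequality $ord_\gamma x_{i+2}\ge (m+1)-s$, so the equality must come from the polynomial-ring description, and I would want to make sure that no other generator $E_{jh}$ involving $x_{i+2}$ secretly forces $x_{i+2}^{(m+1-s)}$ to vanish and thereby pushes the order higher. The syzygy computation underlying the explicit presentation in Proposition~\ref{irr} should rule this out, but that is the point where care is needed.
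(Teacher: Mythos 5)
Your proof is correct and follows essentially the same route as the paper: the equality is extracted from the single relation $E_{i,i+2}=x_ix_{i+2}-x_{i+1}^{c_{i+1}}$, which an $m$-jet must satisfy modulo $t^{m+1}$, combined with the explicit polynomial-ring description of $D_{i,m}^{s,l}$ (the paper cites the description in Proposition \ref{gs}, you the equivalent one in Proposition \ref{irr}; that description does confirm that $x_{i+2}^{(m+1-s)}$ remains a free coordinate, which settles the worry you flag at the end). The only real difference is cosmetic: you use the common codimension $se+(m-(2s-1))(e-2)$ to get the reverse inclusion for free from a single inclusion of irreducible sets, whereas the paper reads the identification directly off the explicit descriptions.
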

\begin{dem}
This follows from the fact that an $m-$jet should verifie $(E_{i,i+2})$ modulo $m+1$, and from the explicit description in the proposition \ref{gs}. 
\end{dem}

Let $S_m^0:=\pi_{m}^{-1}(O)$, where $O$ is the singular point of $S.$ Note that $\overline{\pi_{m}^{-1}(S-\{0\})}$
is an irreducible component of $S_m$ of codimension $(m+1)(e-2)$ in $\mathbb{A}^e_m$ ; We will see that the irreducible components of $S_m^0$ have codimension less than or equal to $(m+1)(e-2),$ therefore they are irreducible components of $S_m.$
\begin{prop}\label{cov}
   $$S_m^0=\bigcup_{i\in \{2,...,e-1\},s \in \{1,\ldots,\lceil\frac{m}{2}\rceil\},l \in \{s,\ldots,m_i^s\}}  C_{i,m}^{s,l}.$$
\end{prop}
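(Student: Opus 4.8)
The plan is to prove the two inclusions separately; the inclusion $\supseteq$ is routine and the inclusion $\subseteq$ carries all the content. For $\supseteq$, fix an admissible triple $(i,s,l)$. By the description obtained in the proof of Proposition \ref{irr} we have $\pi_{m,2s-1}(D_{i,m}^{s,l})\subseteq C_{i,2s-1}^{s,s}$, and by Lemma \ref{eff} every jet of $C_{i,2s-1}^{s,s}$ has all of its coordinates of order $\geq s\geq 1$; hence every $\gamma\in D_{i,m}^{s,l}$ satisfies $ord_\gamma x_j\geq 1$ for all $j$, i.e.\ $\gamma$ is centered at $O$. Thus $D_{i,m}^{s,l}\subseteq S_m^0$, and since $O$ is a closed point the fiber $S_m^0=\pi_m^{-1}(O)$ is closed in $\mathbb{A}^e_m$, so $C_{i,m}^{s,l}=\overline{D_{i,m}^{s,l}}\subseteq S_m^0$.

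For $\subseteq$, take $\gamma\in S_m^0$ and set $a_j:=ord_\gamma x_j\geq 1$. Put $s:=\min\{\,\min_j a_j,\ \lceil\frac{m}{2}\rceil\,\}$, so that $1\leq s\leq \lceil\frac{m}{2}\rceil$, hence $2s-1\leq m$, and $a_j\geq s$ for every $j$. Since $C_{i,2s-1}^{s,s}=V(I_{i,2s-1}^{s,s})$ is independent of $i$ and records exactly the condition that all coordinates have order $\geq s$, we get $\pi_{m,2s-1}(\gamma)\in C_{i,2s-1}^{s,s}$ for every $i$. The favorable case is when the value $s$ is realized at some interior index $i_0\in\{2,\dots,e-1\}$, i.e.\ $a_{i_0}=s$: then $x_{i_0}^{(s)}(\gamma)\neq 0$, so $\gamma\in\pi_{m,2s-1}^{-1}\bigl(C_{i_0,2s-1}^{s,s}\cap D(x_{i_0}^{(s)})\bigr)$, whose closure is $\bigcup_{l=s}^{m_{i_0}^s}C_{i_0,m}^{s,l}$ by Proposition \ref{gs}(2); thus $\gamma$ lies in one of the listed components. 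That the resulting index $l=a_{i_0+1}$ always falls in the admissible range $\{s,\dots,m_{i_0}^s\}$ is forced by the relation $E_{i_0-1,i_0+1}=x_{i_0-1}x_{i_0+1}-x_{i_0}^{c_{i_0}}$: while the orders stay $\leq m$ it imposes $a_{i_0-1}+a_{i_0+1}=c_{i_0}a_{i_0}$, whence $l=a_{i_0+1}\leq (c_{i_0}-1)s$, and the truncation bound gives the other inequality $l\leq m+1-s$.

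The main obstacle is the degenerate case in which no interior index realizes $s$: either $\min_j a_j$ is attained only at $j\in\{1,e\}$, or $\min_j a_j>\lceil\frac{m}{2}\rceil$ (for instance the zero jet and, when $m$ is even, jets with all orders $\geq \tfrac{m}{2}+1$). In these situations $x_i^{(s)}(\gamma)=0$ for all $i$, so $\gamma$ lies in no open stratum and one must reach it through closures. The device I would use is that $C_{i,2s-1}^{s,s}=V(I_{i,2s-1}^{s,s})$ is a linear, hence irreducible, subspace of $\mathbb{A}^e_{2s-1}$ on which the coordinate function $x_i^{(s)}$ is not identically zero; therefore $D(x_i^{(s)})$ is dense in it, and because the truncation $\pi_{m,2s-1}$ restricts the projection of a trivial affine bundle, $\pi_{m,2s-1}^{-1}\bigl(C_{i,2s-1}^{s,s}\cap D(x_i^{(s)})\bigr)$ is dense in $\pi_{m,2s-1}^{-1}(C_{i,2s-1}^{s,s})$. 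Consequently $\gamma\in\pi_{m,2s-1}^{-1}(C_{i,2s-1}^{s,s})\subseteq \overline{\pi_{m,2s-1}^{-1}(C_{i,2s-1}^{s,s}\cap D(x_i^{(s)}))}$ for a suitable interior $i$, and Proposition \ref{gs}(2) again places $\gamma$ in some $C_{i,m}^{s,l}$.

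For the genuinely boundary indices I would appeal to Proposition \ref{gs}(1) together with the gluing Lemma $C_{i,m}^{s,s}=C_{i+1,m}^{s,m_{i+1}^s}$: the irreducible set $\overline{\pi_{m,2s-1}^{-1}(C_{1,2s-1}^{s,s}\cap D(x_1^{(s)}))}$ shares its generic behaviour with the family of index $2$, so the gluing identity transfers the containment into an interior family $C_{2,m}^{s,l}$, and symmetrically at the index $e$. I expect the one genuinely technical point to be the density/closure step carried out inside $S_m$ rather than in the ambient $\mathbb{A}^e_m$ — one must check that on no component of $\pi_{m,2s-1}^{-1}(C_{i,2s-1}^{s,s})\cap S_m$ does $x_i^{(s)}$ vanish identically — together with the bookkeeping that folds the two boundary families into interior ones via the gluing Lemma; the rest is a direct application of Lemma \ref{eff} and Proposition \ref{gs}.
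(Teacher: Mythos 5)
Your $\supseteq$ inclusion and the ``favorable'' case of $\subseteq$ (some interior index $i_0$ with $ord_\gamma x_{i_0}=s=\min_j ord_\gamma x_j$) are correct and coincide with the paper's use of Proposition \ref{gs}(2). The gap is in the degenerate cases, which is where all the content of the proposition sits. Your key device --- that $\pi_{m,2s-1}^{-1}\bigl(C_{i,2s-1}^{s,s}\cap D(x_i^{(s)})\bigr)$ is dense in $\pi_{m,2s-1}^{-1}(C_{i,2s-1}^{s,s})$ because the truncation ``restricts the projection of a trivial affine bundle'' --- is false. The truncation $\mathbb{A}^e_m\to\mathbb{A}^e_{2s-1}$ is a trivial bundle, but its restriction $S_m\to S_{2s-1}$ is not, and the fibers jump over the deep stratum. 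Concretely, for $s=1$ one has $\pi_{m,1}^{-1}(C_{i,1}^{1,1})=S_m^0$, and by the very theorem being proved this set has many irreducible components $C_{i',m}^{s',l}$ with $s'\geq 2$, each of which is contained in $V(x_1^{(1)},\dots,x_e^{(1)})$; so $x_i^{(1)}$ \emph{does} vanish identically on some components of the preimage, and the ``technical point'' you flag at the end as needing verification is in fact the failure mode. The same failure occurs at every level $s$, since $\pi_{m,2s-1}^{-1}(C_{i,2s-1}^{s,s})$ contains all the components of speciality $>s$.

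This is exactly why the paper does not argue by density but by a double induction. For $m=2n+1$ it proves the decomposition $(\diamond)$ by induction on $n$: at each level it splits $C_{i,2n-1}^{n,n}$ into the strata $D(x_j^{(n)})$ \emph{and} the residual stratum $V(x_1^{(n)},\dots,x_e^{(n)})$, and uses Lemma \ref{eff} to identify the preimage of the residual stratum with $C_{i,2n+1}^{n+1,n+1}$ one level up --- i.e.\ the locus your density argument tries to absorb is peeled off and carried to the next stage of the induction. For $m=2n+2$ the remaining piece $\pi_{2n+2,2n+1}^{-1}(C_{i,2n+1}^{n+1,n+1})$ is analyzed by a separate induction on the embedding dimension (the fiber is governed by the quadratic relations among the $x_j^{(n+1)}$, essentially a copy of $S$ itself), and this is precisely the computation your argument omits. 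Finally, for the boundary indices $j\in\{1,e\}$ your appeal to the gluing identity $C_{i,m}^{s,s}=C_{i+1,m}^{s,m_{i+1}^s}$ does not identify $\overline{\pi_{m,2s-1}^{-1}(C_{1,2s-1}^{s,s}\cap D(x_1^{(s)}))}$ with one of the listed components; the paper does this by observing that $Cont^s(x_1)\cap Cont^s(x_2)\neq\emptyset$ (since $(s,s)\in\sigma$), so that this irreducible set (Proposition \ref{gs}(1)) shares its generic point with one of the components of $\overline{\pi_{m,2s-1}^{-1}(C_{1,2s-1}^{s,s}\cap D(x_2^{(s)}))}$. You would need to supply both of these arguments to close the proof.
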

\begin{dem}
We first look at \textbf{the case m=2n+1}, $~n\geq 0.$ We claim that $$S_{2n+1}^0 =\bigcup_{i\in \{1,...,e\},s \in \{1,\ldots,n\}} \pi_{2n+1,2s-1}^{-1}(C_{i,2s-1}^{s,s}\cap D(x_{i}^{(s)}))\cup C_{i,2n+1}^{n+1,n+1}.~~~~~~~~(\diamond)$$
The proof of the claim is by induction on $n.$ 
By lemma \ref{eff}, we have that $S^0_1=C_{i,1}^{1,1}$ for any $i=1,...,e,$ hence the case $n=0.$
Using the inductive hypothesis for $n-1,$ and the fact that for  $s \in \{1,\ldots,n-1\}$ we have that $\pi_{2n-1,2s-1}\circ \pi_{2n+1,2n-1}=\pi_{2n+1,2s-1},$ we obtain:  
$$S_{2n+1}^0=\pi_{2n+1,2n-1}^{-1}(S_{2n-1}^0)=\bigcup_{i\in \{1,...,e\},s \in \{1,\ldots,n-1\}} \pi_{2n+1,2s-1}^{-1}(C_{i,2s-1}^{s,s}\cap D(x_{i}^{(s)}))\cup \pi_{2n+1,2n-1}^{-1}( C_{i,2n-1}^{n,n}).$$
The claim follows from the stratification 

$C_{i,2n-1}^{n,n}=\bigcup_{j=1,\cdots,e}(C_{i,2n-1}^{n,n}\cap D(x_{j}^{(n)}))\cup (C_{i,2n-1}^{n,n}\cap V(x_1^{(n)},
\cdots,x_e^{(n)})), $ \\
and from the fact that by lemma \ref{eff} $\pi_{2n+1,2n-1}^{-1}(C_{i,2n-1}^{n,n}\cap V(x_1^{(n)},
\cdots,x_e^{(n)})) =C_{i,2n+1}^{n+1,n+1}.$\\
We then conclude the proposition for $m=2n+1$ in two steps : First by using proposition \ref{gs} (2). Second, by deducing from that fact that the vector $(s,s) \in \sigma,$ hence $Cont^s(x_1)\cap Cont^s(x_2)\not=\emptyset,$ that $\pi_{2n+1,2s-1}^{-1}(C_{i,2s-1}^{s,s}\cap D(x_{2}^{(s)})) \cap  \pi_{2n+1,2s-1}^{-1} (C_{i,2s-1}^{s,s}\cap D(x_{1}^{(s)}))\not=\emptyset~;$ since by \ref{gs} (1) this latter is irreducible, its generic point coincides with the generic point of one of the irreducible compnents of $\overline{\pi_{2n+1,2s-1}^{-1}(C_{i,2s-1}^{s,s}\cap D(x_{1}^{(s)}))}.$  \\

\textbf{The case m =2(n+1)}, $n\geq0$ : by ($\diamond$) we just need to prove that  $$\pi_{2(n+1),2n+1}^{-1}(C_{i,2n+1}^{n+1,n+1})=
\cup_{\{i=2,\cdots,e-1~;~l=n+1,\cdots, m_i^{n+1}\}}C_{i,2(n+1)}^{n+1,l}.$$
 The proof is by induction on the embedding dimension. We show below the case $e=4$ :\\
If $c_2=c_3=2,$ then $m_i^{n+1}=n+1$ and by lemma \ref{eff}, $\pi_{2(n+1),2(n+1)-1}^{-1}(C_{i,2(n+1)-1}^{n+1,n+1})$  is defined in $\mathbb{A}^e_{2(n+1)}$
by $I_{i,2(n+1)-1}^{n+1,n+1}$ whose generators are coordinates and the ideal
$$(x_1^{(n+1)}x_3^{(n+1)}-x_2^{{(n+1)}^2},x_1^{(n+1)}x_4^{(n+1)}-x_2^{(n+1)}x_3^{(n+1)},x_2^{(n+1)}x_4^{(n+1)}-x_3^{{(n+1)}^2 }).$$
Therefore  $\pi_{2(n+1),2(n+1)-1}^{-1}(C_{i,2(n+1)-1}^{n+1,n+1})$ is irreducible (the above ideal is isomorphic to the ideal which defines the surface $S$) and is equal to $C_{j,2(n+1)}^{n+1,n+1}, j=2,3,$ since $D_{2,2(n+1)}^{n+1,n+1}=D_{3,2(n+1)}^{n+1,n+1}$ is dense in both. The subcases ($c_2=2$ and $c_3 \not=2)$  and ($c_2 \not=2$ and $c_3 \not=2)$ follow also easily.
\end{dem}  
\begin{thm} \label{th}
Let $m\in  \mathbb{N}, ~m\geq 1.$
Modulo the identifications 
$C_{i,m}^{s,s}=C_{i+1,m}^{s,m_{i+1}^s},$  the irreducible components of $S_m^0:=\pi_m^{-1}(0)$ are the $C_{i,m}^{s,l},~i=2, \cdots, e-1,$
$ s \in \{1,\ldots,\lceil\frac{m}{2}\rceil\} $ and
$ l \in \{s,\ldots,m_i^s \}\}.$ 
\end{thm}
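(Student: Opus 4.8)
The plan is to combine the covering of Proposition \ref{cov} with the irreducibility of Proposition \ref{irr}: since $S_m^0=\bigcup C_{i,m}^{s,l}$ and each $C_{i,m}^{s,l}$ is irreducible, the irreducible components of $S_m^0$ are precisely the maximal members of this finite family under inclusion. Thus the entire content of the theorem reduces to showing that, modulo the identifications $C_{i,m}^{s,s}=C_{i+1,m}^{s,m_{i+1}^s}$, no $C_{i,m}^{s,l}$ is contained in another. First I would record the order estimate that drives the argument: on $D_{i,m}^{s,l}$ one has $ord_{\gamma}x_i=s$ exactly, while $ord_{\gamma}x_j\geq s$ for every $j$. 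The equality is immediate because $D_{i,m}^{s,l}\subseteq Cont^s(x_i)_m$, and the lower bound follows from the proof of Proposition \ref{irr}, where $\pi_{m,2s-1}(D_{i,m}^{s,l})\subseteq C_{i,2s-1}^{s,s}$, together with Lemma \ref{eff}, which forces $x_j^{(b)}=0$ for all $b<s$ on $C_{i,2s-1}^{s,s}$. Passing to closures, $C_{i,m}^{s,l}$ lies in the closed locus $\{ord_{\gamma}x_j\geq s \text{ for all } j\}$, while its generic point still satisfies $ord_{\gamma}x_i=s$.

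Next I would rule out inclusions by a combination of an order invariant and a dimension count. Suppose $C_{i,m}^{s,l}\subseteq C_{i',m}^{s',l'}$. Applying the order estimate to the target, every jet of $C_{i',m}^{s',l'}$, and in particular the generic jet of $C_{i,m}^{s,l}$, satisfies $ord_{\gamma}x_j\geq s'$ for all $j$; evaluating on $x_i$, where the generic jet of the source has order exactly $s$, yields $s\geq s'$. On the other hand, by Proposition \ref{irr} the codimension of $C_{i,m}^{s,l}$ equals $(m+1)(e-2)-s(e-4)$, which, since $e\geq 4$, is non-increasing in $s$; hence $s\geq s'$ forces $\dim C_{i,m}^{s,l}\geq \dim C_{i',m}^{s',l'}$. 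As the inclusion gives the opposite inequality, the two irreducible sets have equal dimension and therefore coincide.

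It remains to match the resulting coincidences with the stated identifications. An equality $C_{i,m}^{s,l}=C_{i',m}^{s',l'}$ means the two closures share a generic jet $\gamma$; applying the previous paragraph to both inclusions gives $s=s'$. The order profile $(ord_{\gamma}x_1,\ldots,ord_{\gamma}x_e)$ of $\gamma$ is intrinsic, and by Remark \ref{nv} it is the profile $(\langle u_1,v\rangle,\ldots,\langle u_e,v\rangle)$ of a single valuation $v\in\sigma$; because the $u_j$ lie on the boundary of $\sigma^\vee$ and $v\in\sigma$, this profile is a convex sequence in $j$. If $i=i'$ then $l=ord_{\gamma}x_{i+1}=ord_{\gamma}x_{i'+1}=l'$ and the triples agree. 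If $i\neq i'$, comparing $ord_{\gamma}x_{i}=s$ with $ord_{\gamma}x_{i'}=s$ on a convex sequence whose minimum is $s$ forces the minimum to be attained along the whole run between $i$ and $i'$, so that in particular $ord_{\gamma}x_{i+1}=s$, i.e. $l=s$; the equality is then exactly the boundary identification $C_{i,m}^{s,s}=C_{i+1,m}^{s,m_{i+1}^s}$ propagated along that run. This shows the listed identifications are the only coincidences and completes the proof.

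The genuinely delicate step is this last paragraph: establishing that the generic order profile is a complete invariant of the component and that its sole coincidences come from the boundary case $l=s$. This is where the convex geometry of the valuations $v\in\sigma$ furnished by Remark \ref{nv} must be used, rather than the purely dimension-theoretic reasoning that suffices for ruling out inclusions between distinct values of $s$.
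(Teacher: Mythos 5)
Your proof is correct and shares its skeleton with the paper's: both start from the covering of Proposition \ref{cov}, rule out inclusions across different indices of speciality by playing the order bound $ord_\gamma x_j\geq s'$ against the codimension formula of Proposition \ref{irr} (the paper phrases the first half as ``$C_{i,m}^{s,l}$ meets $D(x_i^{(s)})$ while $C_{i',m}^{s',l'}\subset V(x_i^{(s)})$'', which is exactly your order estimate). Where you genuinely diverge is the case $s=s'$: the paper disposes of it by citing the explicit descriptions of Proposition \ref{gs} --- for distinct $(i,l)$ there is a coordinate hyperplane $V(x_j^{(b)})$ containing one closure but not the other, and equidimensionality (Proposition \ref{irr}) then forbids either inclusion --- whereas you analyse the generic order profile and its convexity coming from the toric relations $u_{j-1}+u_{j+1}=c_ju_j$. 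Your route has the advantage of actually classifying the coincidences (the paper's ``apart from the identifications above'' is left implicit), but it carries the one step you should tighten: Remark \ref{nv} only asserts $\Psi_m^{-1}(D_{i,m}^{s,l})\neq\emptyset$, and for an $m$-jet the order profile is not literally $(\langle u_j,v\rangle)_j$ --- truncation modulo $t^{m+1}$ caps the entries, and a capped convex sequence need not be convex. The conclusion you need survives (if the minimum $s$ of the uncapped convex profile is attained at $i$ and $i'$, it is attained, hence also $\leq m$, on the whole interval between them), but it is cleaner to argue directly from the relation $E_{i,i'}$ read modulo $t^{m+1}$: since $2s\leq m$ (the boundary case $2s=m+1$ forces $l=s$ and is Lemma \ref{eff}), the monomial $x_{i+1}^{c_{i+1}-1}\cdots x_{i'-1}^{c_{i'-1}-1}$ must have order exactly $2s$ while each variable has order $\geq s$, which forces the intermediate $c_j$ to equal $2$ and $ord_\gamma x_{i+1}=ord_\gamma x_{i'-1}=s$, and one recurses. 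With that substitution your argument is complete and, in my view, makes the identification statement of the theorem more transparent than the published sketch.
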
 
\begin{dem}
By proposition \ref{cov}, $S_m^{(0)}$ is covered by the $C_{i,m}^{s,l}.$ But apart from the identifications above, $C_{i,m}^{s,l}\not \subset C_{i',m}^{s,l'},$
because by proposition \ref{gs}, there exist hyperplane coodinates that contain the one but not the other, and by proposition \ref{irr} they have the same dimension. On the other hand $C_{i,m}^{s,l}\not \subset C_{i',m}^{s',l'},$ if $s <s',$ because by proposition \ref{gs} the $C_{i,m}^{s,l}$ has non-empty intersection with  $D(x_i^{(s)}),$ but $C_{i',m}^{s',l'}\subset V(x_i^{(s)})$. Finally, $C_{i',m}^{s',l'} \not \subset C_{i,m}^{s,l},$ because by proposition \ref{irr} the codimension of the first one is less than or equal to the codimension of the second one, and the theorem follows.
\end{dem}

\begin{rem} 
Given Theorem \ref{th}, remark \ref{nv} means that there are no vanishing components.
 \end{rem}
\begin{defi}
 Let $m \in \IN,~m\geq 1,$ and let $C$ be an irreducible component of $S_m^0.$ By Theorem \ref{th}, there exist  
$ s \in \{1,\ldots,\lceil\frac{m}{2}\rceil\}, $ 
$ l \in \{s,\ldots,m_i^s \} $ and  $i \in \{ 2,\cdots, e-1\}$ such that $C=C_{i,m}^{s,l}.$
We say that $C$ has index of speciality $s.$ Note that $s=ord_{\gamma}(M):=\textnormal{min}_{f\in M}\{ord_{\gamma}(f)\}$ where $M$ is the maximal ideal of the local ring $O_{S,0}$ and $\gamma$ the generic point of $C.$ 
\end{defi}
For $a,b\in \mathbb{N},~b\not=0,$ we denote by
$[\frac{a}{b}]$ the integral part of $\frac{a}{b}.$ 
 For $c,m \in \mathbb{N}$, let $m=q_cc+r$ be the euclidian division of $m$ by $c$. We set
$$N_c^{s}(m):=(sc-(2s-1)), for~~s=1,...,q_c~;~ N_c^{s}(m):=m-(2s-2),for~~s=q_c+1,...,\lceil\frac{m}{2}\rceil.$$
 For $m\in \mathbb{N},~m\geq 1$, we call $N(m)$ the number of irreducible component of $S_m^0.$ Then counting the irreducible components in the Theorem \ref{th} we find
\begin{cor}\label{nb}
 If all the $c_i$ are equal to $2,$ then $N(m)=\lceil\frac{m}{2}\rceil.$ Otherwise let $c_{i_1},...,c_{i_h}$ be the elements in $\{c_2,\ldots,c_{e-2}\}$ different from $2,$ then we have $N(m)=\sum_{s=1}^{\lceil\frac{m}{2}\rceil}(N^s_{c_{i_1}}(m)+(N^s_{c_{i_2}}(m)-1)+\ldots+(N^s_{c_{i_h}}(m)-1)).$
\end{cor}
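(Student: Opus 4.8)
The plan is to count the components separately for each value of the index of speciality and then sum. Write $\nu_s$ for the number of irreducible components of $S_m^0$ of index of speciality $s$, so that $N(m)=\sum_{s=1}^{\lceil m/2\rceil}\nu_s$. By Theorem \ref{th} the components of index of speciality $s$ are the $C_{i,m}^{s,l}$ with $i\in\{2,\ldots,e-1\}$ and $l\in\{s,\ldots,m_i^s\}$, taken modulo the identifications $C_{i,m}^{s,s}=C_{i+1,m}^{s,m_{i+1}^s}$; moreover Theorem \ref{th} guarantees that these are the only coincidences among them. Hence $\nu_s$ is exactly the number of equivalence classes of pairs $(i,l)$ under these identifications, and the whole proof reduces to a combinatorial count.

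First I would count the pairs $(i,l)$ before identifying. For fixed $i$ there are $m_i^s-s+1$ admissible values of $l$, and the first computation is to recognise this as $N_{c_i}^s(m)$. Since $m_i^s=\min\{(c_i-1)s,(m+1)-s\}$, we get $m_i^s-s+1=\min\{(c_i-2)s+1,\,m-2s+2\}$; writing $m=q_{c_i}c_i+r$, one checks that $s\le q_{c_i}\Rightarrow sc_i\le m\Rightarrow (c_i-2)s+1\le m-2s+2$, so the first term is selected and equals $sc_i-(2s-1)$, whereas $s>q_{c_i}\Rightarrow sc_i\ge m+1$, so the second term is selected and equals $m-(2s-2)$; the two expressions coincide on the overlap $sc_i=m+1$, so there is no ambiguity at the transition. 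This is precisely the definition of $N_{c_i}^s(m)$. In particular, when $c_i=2$ we get $N_2^s(m)=\min\{1,m-2s+2\}=1$, using $2s\le m+1$.

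Next I would put in the identifications. For fixed $s$ the relation $C_{i,m}^{s,s}=C_{i+1,m}^{s,m_{i+1}^s}$ glues the $l=s$ member of the chain of index $i$ to the $l=m_{i+1}^s$ member of the chain of index $i+1$, once for each $i\in\{2,\ldots,e-2\}$, so there are exactly $e-3$ such relations. Regarding the pairs $(i,l)$ as vertices and the identifications as edges, I claim the resulting graph is a forest. Indeed an edge joins only two consecutive chains, there is a single edge between chains $i$ and $i+1$, and adding the edges in the order $i=2,3,\ldots,e-2$ each one merges two classes that were still distinct (the vertex of chain $i+1$ it touches has not yet been reached, since earlier edges only involve chains $\le i$); this step is where the ``no further coincidences'' part of Theorem \ref{th} is used, and one must treat with care the case $c_i=2$, in which the chain of index $i$ is the single vertex $(i,s)$ that gets glued on both sides. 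Consequently the number of classes equals the number of vertices minus the number of edges:
$$\nu_s=\sum_{i=2}^{e-1}N_{c_i}^s(m)-(e-3).$$

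Finally I would simplify and sum over $s$. Let $c_{i_1},\ldots,c_{i_h}$ be the entries different from $2$ among $c_2,\ldots,c_{e-1}$; the remaining $(e-2)-h$ entries equal to $2$ each contribute $1$ to the sum, so $\sum_{i=2}^{e-1}N_{c_i}^s(m)=\sum_{k=1}^h N_{c_{i_k}}^s(m)+(e-2-h)$ and therefore $\nu_s=\sum_{k=1}^h N_{c_{i_k}}^s(m)-(h-1)=N_{c_{i_1}}^s(m)+\sum_{k=2}^h\bigl(N_{c_{i_k}}^s(m)-1\bigr)$. If all $c_i=2$ this gives $\nu_s=1$ for every $s$, whence $N(m)=\lceil m/2\rceil$; otherwise summing the previous expression over $s=1,\ldots,\lceil m/2\rceil$ yields the stated formula. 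The main obstacle is the combinatorial bookkeeping of the identifications in the third step---showing that the $e-3$ gluings lower the count by exactly $e-3$, with no cycles and no accidental extra coincidences---together with the clean boundary behaviour of $N_{c_i}^s(m)$ established in the second step.
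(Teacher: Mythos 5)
Your count is correct and is exactly the computation the paper leaves implicit: the paper's proof of Corollary \ref{nb} is the single sentence ``counting the irreducible components in Theorem \ref{th}'', and your two key steps --- identifying $m_i^s-s+1$ with $N_{c_i}^s(m)$ via the case split at $s=q_{c_i}$, and the vertices-minus-edges argument showing the $e-3$ gluings $C_{i,m}^{s,s}=C_{i+1,m}^{s,m_{i+1}^s}$ lower the count by exactly $e-3$ --- are the intended bookkeeping. Note only that you (correctly) let $c_{i_1},\ldots,c_{i_h}$ range over the entries of $\{c_2,\ldots,c_{e-1}\}$ different from $2$, whereas the corollary as printed writes $\{c_2,\ldots,c_{e-2}\}$; the latter appears to be a typo, since your version is the one consistent with both your formula $\nu_s=\sum_{i=2}^{e-1}N_{c_i}^s(m)-(e-3)$ and the subsequent corollary counting exceptional divisors.
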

\begin{cor} For $m\geq \mbox{max}\{c_i,~i=2,\cdots,e-1\},$ the number of irreducible components of $S_m^0,$ with index of speciality 
 $s=1,$ is equal to the number of exceptional divisors that appear on the minimal resolution of $S.$
\end{cor}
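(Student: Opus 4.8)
The plan is to count the speciality-$1$ components delivered by Theorem \ref{th} and to match this count against a continued-fraction formula for the number of exceptional divisors. First I would fix $m\geq\max\{c_i\}$ and note that then $m_i^1=\min\{c_i-1,m\}=c_i-1$ for every $i$, so that by Theorem \ref{th} the components of index of speciality $1$ are exactly the $C_{i,m}^{1,l}$ with $2\leq i\leq e-1$ and $1\leq l\leq c_i-1$, subject only to the identifications $C_{i,m}^{1,1}=C_{i+1,m}^{1,c_{i+1}-1}$ obtained by specializing the gluing $C_{i,m}^{s,s}=C_{i+1,m}^{s,m_{i+1}^s}$ to $s=1$. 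The key point, already guaranteed by the non-containment analysis in the proof of Theorem \ref{th}, is that these are all the coincidences among speciality-$1$ components, so counting reduces to a combinatorial bookkeeping of these gluings.

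Next I would carry out the count. Before gluing there are $\sum_{i=2}^{e-1}(c_i-1)$ pairs $(i,l)$, and I would encode the gluings as a graph on these pairs whose edges join $(i,1)$ to $(i+1,c_{i+1}-1)$ for $2\leq i\leq e-2$, giving exactly $e-3$ edges. Since every edge raises the index by one, the index strictly increases along any edge-path, so the graph has no cycle and is a disjoint union of paths; the only delicate case is $c_i=2$, where the single pair $(i,1)=(i,c_i-1)$ carries two incident edges but still lies on a path. Hence the number of distinct speciality-$1$ components equals vertices minus edges,
$$\sum_{i=2}^{e-1}(c_i-1)-(e-3)=\sum_{i=2}^{e-1}c_i-2e+5.$$

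Finally I would compute the number of exceptional divisors and match it to this expression. The minimal resolution of the cyclic quotient singularity $S$ is governed by the Hirzebruch--Jung continued fraction that is Riemenschneider-dual to the defining fraction $[c_2,\ldots,c_{e-1}]$ of $q/p$, and its exceptional divisors form a chain whose length is the length of that dual continued fraction. Reading off Riemenschneider's staircase, a continued fraction of length $e-2$ with entries $c_2,\ldots,c_{e-1}$ has dual of length $\sum_{i=2}^{e-1}c_i-(2(e-2)-1)=\sum_{i=2}^{e-1}c_i-2e+5$, which is precisely the number found above; this proves the corollary. I expect the main obstacle to be this duality step: fixing the normalization so that the resolution chain is counted without an off-by-one error, which I would verify against the rational double point case $e=3$, $S=A_{c_2-1}$, where both sides equal $c_2-1$. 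Conceptually the coincidence is forced by the toric dictionary underlying Remark \ref{nv}: a speciality-$1$ component corresponds to a primitive $v\in\sigma$ with $\min_j\langle u_j,v\rangle=1$, and these are exactly the primitive vectors on the compact boundary of the Newton hull, namely the rays of the minimal resolution.
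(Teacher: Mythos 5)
Your proof is correct and takes essentially the same route as the paper: the paper's one-line proof compares the speciality-$1$ count from Corollary \ref{nb} (which for $m\geq\max\{c_i\}$ reduces to exactly your vertices-minus-edges total $\sum_{i=2}^{e-1}c_i-2e+5$) with Oda's Corollary 1.23, which is precisely the Riemenschneider/Hirzebruch--Jung duality you invoke to count the exceptional divisors of the minimal resolution. The only difference is that you spell out both sides of the comparison explicitly where the paper delegates them to citations.
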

\begin{proof}
This comes from the comparaison of corollary \ref{nb} with corollary $1.23$ in \cite{O} page 29. 
 \end{proof}

\end{document}